\newtheorem{thm}{Theorem}[section]
\newtheorem{lemma}[thm]{Lemma}
\newtheorem{prop}[thm]{Proposition}
\DeclareMathOperator{\Var}{Var}
\DeclareMathOperator{\id}{id}
\DeclareMathOperator{\TV}{TV}
\def\.{\hskip.06cm}
\def\ts{\hskip.03cm}
\begin{document}

\title[Cutoff for biased transpositions]{Cutoff for biased transpositions}
\author{Megan Bernstein$^\star$ \and Nayantara Bhatnagar$^\dagger$ \and Igor Pak$^\ddagger$}

\thanks{\today}
\thanks{\thinspace ${\hspace{-.45ex}}^\ast$School of Mathematics, Georgia Institute of Technology,
Atlanta, GA~30332.
\hskip.06cm
Email:
\hskip.06cm
\texttt{bernstein@math.gatech.edu}}

\thanks{\thinspace ${\hspace{-.45ex}}^\dagger$Mathematical Sciences,
University of Delaware, Newark, DE, 19716.
\hskip.06cm
Email:
\hskip.06cm
\texttt{naya@math.udel.edu}}

\thanks{\thinspace ${\hspace{-.45ex}}^\ddagger$Department of Mathematics,
UCLA, Los Angeles, CA~90095.
\hskip.06cm
Email:
\hskip.06cm
\texttt{({pak}@)math.ucla.edu}}

\maketitle

\begin{abstract}
In this paper we study the mixing time of a biased transpositions shuffle on a set of $N$ cards with $N/2$ cards of two types. For a parameter $0<a \le 1$, one type of card is chosen to transpose with a bias of $\frac{a}{N}$ and the other type is chosen with probability $\frac{2-a}{N}$. We show that there is cutoff for the mixing time of the chain at time $\frac{1}{2a} N \log N$. Our proof uses a modified marking scheme motivated by Matthews' proof of a strong uniform time for the unbiased shuffle~\cite{Mat}.
\end{abstract}

\bigskip

\section{Introduction}

\noindent
The \emph{cutoff phenomenon} is a remarkable property of Markov chains, indicating rapid transition of the chain from an unmixed to a mixed state.  It originates in the study of \emph{phase transitions} in physics, but remains challenging to establish in many natural situations.  We refer to~\cite{Dia,LPW} for the  introduction to cutoff including many examples and review of the literature, and to~\cite{BHP,CS} for some general results and further examples.

In this paper we consider the case of a random walk on the symmetric group $S_N$ with random transpositions. This is one of the most classical examples motivated by card shuffling.  The permutations are represented as the orderings of $N$ cards on a table. Our generators, the transpositions, as a selection of a card by the right hand and left hand, whose locations will be exchanged.  Diaconis and Shahshahani famously showed in~\cite{DS} that if the right hand and left hands are placed independently uniformly at random, then this walk mixes with cutoff at $\frac{1}{2}N\log N \pm cN$ steps.  The proof is based on character estimates, so the symmetry played a critical role.

In this paper we break the symmetry and analyze the following \emph{biased random walk with two types}.  Consider the random walk on the symmetric group $S_N$ with transpositions. Instead choosing transpositions uniformly at random, the transposition $(i j)$ is chosen with probability $p_{i,j} = p_i p_j$ with the identity chosen with probability $\sum_{i} p_i^2$. Let $n = N/2$, where $N$ is even. We consider the case where the $p_i$ are evenly split between taking values $\frac{a}{N}$ or $\frac{b}{N}$ with $a +b =2$, and $0<a\leq b$.
In the biased scheme, this amounts to a hand landing on a specific $a$ card with probability $\frac{a}{N}$ and a $b$ card with probability $\frac{b}{N}$, again with independence between the hands. We will show here that when the selection of the cards is biased towards half the cards, cutoff will still occur at the delayed time of $\frac{1}{2a}N\log N \pm o(N\log N)$.

This walk, despite the bias, still has uniform stationary distribution. This is because the walk is reversible with respect to the stationary distribution since for each $i,j$, $p_{i,j} = p_{j,i}>0$.  Biasing schemes in which this does not hold, for example where $p_{i,j}$ depends on whether $i$ and $j$ are currently in order, have been studied by~\cite{BMRS} (see also~\cite{Jon}).  Another scheme for random transpositions was studied in~\cite{MPS,Pak}, where one hand has a deterministic behavior.  In none of these cases the cutoff has been established.

\begin{thm}
For all $0< a <2$ and $\varepsilon \in (0,1)$, the mixing time $T(\varepsilon)$ of the biased random walk defined as above, satisfies
$$\lim_{N \to \infty} \, \frac{ 2\ts a \. T(\varepsilon)}{ N \log N} \, = \. 1\ts.
$$
\end{thm}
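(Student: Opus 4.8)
The plan is to establish two matching bounds on $T(\varepsilon)$: a lower bound $T(\varepsilon)\ge(1-o(1))\frac{1}{2a}N\log N$ and an upper bound $T(\varepsilon)\le(1+o(1))\frac{1}{2a}N\log N$, for each fixed $\varepsilon\in(0,1)$. Throughout we use that $a\le b=2-a$, so the $n=N/2$ cards that each hand selects with probability $a/N$ (call them the \emph{$a$-cards}) are the slow, bottleneck cards: a fixed $a$-card is avoided by both hands at a given step with probability $(1-a/N)^{2}=1-\tfrac{2a}{N}+O(N^{-2})$, whereas a $b$-card is missed only with probability $(1-b/N)^{2}$, which is smaller. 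Since the walk is a random walk on $S_N$, the total variation distance from stationarity is the same from every start, so we may work from the identity.

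\emph{Lower bound.} Fix $c\in(0,1)$ and set $t=\frac{c}{2a}N\log N$. Call an $a$-card \emph{untouched} at time $t$ if neither hand selected it during the first $t$ steps; an untouched card is a fixed point of $X_t$. A given $a$-card is untouched with probability $(1-\tfrac{2a}{N}+O(N^{-2}))^{t}=N^{-c}(1+o(1))$, so the number $U_t$ of untouched $a$-cards has $\mathbb E[U_t]=\tfrac N2 N^{-c}(1+o(1))=\tfrac12 N^{1-c+o(1)}\to\infty$. For two distinct $a$-cards, at each step the probability that both are missed is $(1-\tfrac{2a}{N})^{2}<(1-\tfrac aN)^{4}$, so the two ``untouched'' events are negatively correlated and $\mathrm{Var}(U_t)\le\mathbb E[U_t]$; by Chebyshev, $U_t\ge\tfrac12\mathbb E[U_t]$ with probability $1-o(1)$. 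On the other hand, a uniformly random permutation of $S_N$ has at least $\tfrac12\mathbb E[U_t]$ fixed points with probability at most $2/\mathbb E[U_t]\to0$ (Markov's inequality, since the mean number of fixed points is $1$). Hence the event ``the permutation has at least $\tfrac12\mathbb E[U_t]$ fixed points'' has probability $1-o(1)$ under the law of $X_t$ and $o(1)$ under the stationary law, so $d_{\mathrm{TV}}(t)\to1$ and therefore $\liminf_N\frac{2a\,T(\varepsilon)}{N\log N}\ge c$. Letting $c\uparrow1$ gives the lower bound.

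\emph{Upper bound.} We construct a strong uniform time $T$ by adapting Matthews' marking scheme \cite{Mat} to the biased chain. We maintain a set $M_t\subseteq[N]$ of \emph{marked} cards, grown from an initial seed by a rule depending only on the transpositions seen so far, and arranged so that the following invariant holds: conditionally on $M_t$ and on $T$, the relative order of $X_t$ is uniform on $S_N$. Then $T=\min\{t:M_t=[N]\}$ is a strong uniform time and $d_{\mathrm{TV}}(t)\le\Pr[T>t]$. The marking rule is designed so that, once $|M_t|$ is a positive fraction of $N$, each touch of an unmarked card by a hand marks that card up to a negligible fraction of exceptions; consequently, in this ``main phase'', when $j$ $a$-cards are still unmarked the rate at which some new $a$-card becomes marked is $(1-o(1))\frac{2aj}{N}$, and analogously $\frac{2bj'}{N}$ for the $j'$ unmarked $b$-cards. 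The expected duration of the main phase is then at most
\[
(1+o(1))\Bigg(\sum_{j=1}^{n}\frac{N}{2aj}\;\vee\;\sum_{j=1}^{n}\frac{N}{2bj}\Bigg)=(1+o(1))\frac{1}{2a}N\log n=(1+o(1))\frac{1}{2a}N\log N,
\]
since $b\ge a$; the short ``initial phase'' needed to grow $M_t$ from the seed up to a constant fraction of $N$ is treated separately and shown to cost only $o(N\log N)$ steps — this is where a Matthews-type refinement of the crude Broder seeding is essential, since the naive rule loses a constant factor here. Representing the increments of $|M_t|$ in the main phase by essentially independent geometric waiting times and applying a Chernoff- or second-moment-type concentration bound upgrades the expectation estimate to $\Pr[T>(1+\delta)\frac{1}{2a}N\log N]\to0$ for every $\delta>0$; hence $\limsup_N\frac{2a\,T(\varepsilon)}{N\log N}\le1$, and combined with the lower bound this proves the theorem.

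\emph{Main obstacle.} The crux is the design of the modified marking rule. Broder's original rule loses a constant factor at both ends of the process — at the start, because with few marked cards a fresh touch rarely triggers a mark, and at the finish for a symmetric reason — which would only yield an upper bound a constant multiple of $\frac{1}{2a}N\log N$. Obtaining the sharp leading constant requires a Matthews-type refinement together with a careful, separate accounting of the $o(N\log N)$-length initial phase, and one must still verify that the refined rule preserves the conditional-uniformity invariant, which is done by induction on the steps and is the other delicate point; the presence of two card types adds the (routine but necessary) bookkeeping of running the coupon-collector estimate for $a$-cards and $b$-cards in parallel and checking that the $a$-cards dominate.
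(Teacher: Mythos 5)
Your overall strategy is the paper's: a fixed-point coupon-collector lower bound on the slow $a$-cards, and an upper bound via a modified Matthews strong uniform time run in two phases. The lower bound is essentially correct and complete (the paper phrases it slightly differently, working with the set $A_K$ of permutations having at least $K=(2n)^\delta$ $a$-card fixed points and a derangement count on the stationary side, but it is the same second-moment argument). The upper bound, however, has genuine gaps beyond the admitted omission of the marking rule.

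First, the asserted rate $(1-o(1))\tfrac{2aj}{N}$ for marking a new $a$-card when $j$ remain is not what a Matthews-type rule gives at a fixed marked fraction $c_1$: a fresh touch only marks (rather than wastes) when the \emph{other} hand is on a marked card or on the designated marked pair, so the exceptions are not negligible — they are a constant fraction of order $1-c_1$. The paper gets the rate $\tfrac{2a j(2c_1-1)}{N}$ and recovers the sharp constant $\tfrac{1}{2a}$ only by choosing $c_1=c_1(\epsilon)$ close to $1$, with Proposition~\ref{tc1n} certifying that the first phase is still $O(N\log\log N)$ for any such fixed $c_1$. Second, the $b$-card bookkeeping is not a parallel coupon collector as your $\max$ of two harmonic sums suggests: the uniformity-preserving rule must allow a mark to \emph{move} from a marked $b$-card to a touched unmarked $a$-card (cases 2--3 of the scheme), so $k_b$ is not monotone, and the paper bounds the resulting process via the Newman--Shepp double dixie-cup estimate~\cite{NS}, contributing an extra $O(N\log\log N)$. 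Third, the heart of the matter — that the rule, with its acceptance probabilities $a/p(\cdot)$, the injective designated-pair device, and the mark-moving steps, actually preserves the conditional uniformity of the marked cards (the paper's Propositions~\ref{s1sst} and~\ref{s2sst}, proved by a two-permutation factorization $\pi_t=\phi_t\psi_t^{-1}$) — is where nearly all the work lies, and asserting it is ``done by induction'' leaves the argument unsubstantiated. In short: right architecture, correct lower bound, but the upper bound's key quantitative claims depend on a scheme you have not built, and two of those claims (the $(1-o(1))$ rate and the decoupled $b$-collector) are not actually true for the scheme that works.
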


\noindent
Here the \emph{mixing time} $T(\varepsilon)$ is defined in terms of the \emph{separation distance} (see e.g.~\cite{LPW}): 
$$
T(\varepsilon) \, := \, \min\left\{\. t \, : \, P^t(\sigma) \ge \frac{1-\varepsilon}{N!} \ \, \text{for all} \ \sigma\in S_N\right\}\ts.
$$
The lower bound in the theorem follows from a coupon collector argument. A matching upper bound is achieved by modifying a strong uniform time of Matthews~\cite{Mat}. The \emph{total variation distance} mixing time
$$T_{\TV}(\varepsilon) \, := \, \min\left\{\. t\,: \frac{1}{2}\sum_{\sigma\in S_N} \left|P^t(\sigma)  - \frac{1}{N!}\right| \leq \epsilon \right\}\ts$$ 
satisfies $T_{TV}(\varepsilon) \leq T(\varepsilon)$ (see e.g. ~\cite{LPW}).
Since the lower bound is established in terms of total variation distance and the upper bound in terms of separation distance, the result gives cutoff in both separation distance and total variation distance.

Note that a general strong uniform time argument by the third author~\cite{Pak} shows for any biasing scheme on a random walk on a group with minimal probability of a generator $\alpha >0$, the mixing time under the bias is at worst $\frac{1}{\alpha}$ times the original walk's mixing time. For this walk, this gives a \. $\frac{1}{2a^2}N\log N$ \. upper bound, which has correct order of magnitude but not strong enough for a cutoff.

\bigskip

\section{Marking Scheme}

At time $t$, let $R_t$ be the card selected by the right hand and $L_t$ the card selected by the left hand. Let $p(R_t)$ and $p(L_t)$ be the bias of the card, either $a$ or $b$. We will construct a marking scheme so that the following holds. At all times conditioned on the time, locations of marked cards, and values of the marked cards, the marked cards should be uniformly distributed. At $t=0$ no cards are marked. Let $k$ be the number of marked cards at the beginning of the step $t$. As in Matthews, two different marking schemes will be utilized. The first scheme will be used while $k < c_1N$, and the latter between $c_1N \leq k < N$. The marking scheme ends when all cards are marked. The first phase will contribute $O(N\log\log n)$ steps and the latter $(1+\epsilon)\frac{1}{2a}N \log N$. The value of $c_1$ will depend on the choice of the $\epsilon$ in the definition of cutoff, but will always be taken to be greater than~$\frac{1}{2}$.

While $k< c_1N$, let $R_t$ and $L_t$ selected independently according to the bias. If both $R_t$ and $L_t$ are unmarked, mark $R_t$ with probability $\frac{a^2}{p(R_t)p(L_t)}$. 

For $t$ with $c_1N \leq k < N$, let $m_i$ be the $i$th marked card on the table and $u_i$ the $i$th unmarked card on the table. For each $u_i$ associate one ordered pair of marked cards $(m_{r(u_i)},m_{\ell(u_i)})$ so that the mapping $(r,l)$ is injective and at least one is of the same bias ($a$ or $b$) as $u_i$. This can be accomplished if $c_1 > \frac{1}{2}$. For $1 \leq i \leq n- k$, mark $u_i$ with the given probability if any of the following happen:
\begin{enumerate}\label{s2c}
\item $R_t = L_t = u_i$
		\begin{itemize}
			\item mark $u_i$ w.p. $\frac{a}{p(u_i)}$
		\end{itemize}
\item $R_t = u_i$, $L_t$ is marked
		\begin{itemize}
			\item mark $u_i$ w.p. $\frac{a}{p(L_t)}$
			\item move the mark from $L_t$ to $u_i$ w.p. $1 - \frac{a}{p(L_t)}$
		\end{itemize}
\item $L_t = u_i$, $R_t$ is marked
		\begin{itemize}
			\item mark $u_i$ w.p. $\frac{a}{p(R_t)}$
			\item move the mark from $R_t$ to $u_i$ w.p. $1 - \frac{a}{p(R_t)}$
		\end{itemize}
\item $R_t = r(u_i)$, $L_t = \ell(u_i)$
		\begin{itemize}
			\item mark $u_i$ w.p. $\frac{ap(u_i)}{p(R_t)p(L_t)}$
		\end{itemize}
\end{enumerate}


The general outline of this scheme is inherited from Matthews with the additions of the acceptance probabilities and the moving of marks to maintain the relative uniformity of the marked cards. The first scheme can be thought of as building a random permutation by choosing a random unmarked card and putting it in a random unmarked position. The second scheme continues building the random permutation selects an unmarked card and moves it to a uniformly random position relative to the currently chosen cards. The moves that mark cards generate the uniform randomness and all additional moves by transitivity of the group, preserve it. In Section \ref{psu}, these will be proven to give a strong uniform time.

\bigskip

\section{Upper Bound}

\subsection{First marking scheme}

Let $T_k$ be the first time there are $k$ marked cards. For $k < c_1N$ with $k_a$ $a$ cards and $k_b$ $b$ cards marked, $T_{k+1}- T_{k}$ has a geometric distribution with probability of success
\[p_{k} = (n-k_a)^2 \frac{a^2}{2n^2} + 2(n-k_a)(n-k_b) \frac{a^2}{2n^2} +  (n-k_b)^2 \frac{a^2}{2n^2} = \left(\frac{a(N-k)}{N}\right)^2.
\]
An effect of the laziness in the first marking scheme is that each card is marked at the same rate regardless of its type, so the time is independent of the current number of $a$ and $b$ cards marked.

\begin{prop}\label{tc1n}
For every constant $0 < c_1 < 1$, there exists a constant~$C$, such that:
\[ P(T_{c_1N} > N\log\log N) \leq \frac{C}{\log\log N}\]
\end{prop}
\begin{proof}
The expected time to mark $c_1N$ cards is
\[
\mathbb{E}T_{c_1N}=\sum_{j=0}^{c_1N -1} \frac{1}{p_j} \leq \frac{c_1}{a^2\left(1-c_1 \right) }N,
\]
and the statement thus follows by Markov's inequality with $C = \frac{c_1}{a^2\left(1-c_1 \right)}$.
\end{proof}

\subsection{Second marking scheme}

For $k>c_1N$, $T_{k+1}-T_k$ is still geometric, but will depend on $k_a$ and $k_b$, the current numbers of marked $a$ and $b$ cards. In addition to increasing $k_a+ k_b$, there is the possibility of incrementing $k_a$ and decrementing $k_b$. It will be simple to show that the expected time it takes for $k_a=\frac{n}{2}$ is as required. For $k_b$, since $b$ cards are unmarked at a rate depending on the current number of $a$ cards, tracking both types requires more sophisticated techniques. The changes to $k_a$ and $k_b$ form an absorbing Markov chain. Below, we compute the transition probabilities of this Markov chain $K$ on pairs $(k_a,k_b)$ of the current number of marked $a$ and $b$ cards.

The number of marked $b$ cards changes if one of three things happen. The first is that $L_t = R_t$ is a $b$ card and the $\frac{a}{b}$ chance to mark succeeds. This occurs with probability $\left(n - k_b\right) \frac{b^2 }{(2n)^2} \frac{a}{b}$. The second is that that one hand lands on a marked $a$ or $b$ card and on an unmarked $b$ card, and that the chance to mark succeeds. This has probability $\frac{2ak_ab\left(n - k_b\right) + 2bk_bb\left(n - k_b \right)\frac{a}{b}}{(2n)^2}$. Finally, the last option is that the left and right are the chosen pair to mark a $b$ card, which is designed to occur with probability $\frac{ab}{n^2}$ for each of the $\frac{n}{2}-k_b$ unmarked $b$ cards. These sum to:
\[
K\bigl( (k_a,k_b), (k_a,k_b+1)\bigr) \. = \, \frac{2ab\left(n - k_b\right)(k_a + k_b + 1)}{(2n)^2}\..
\]
The logic is the same for increasing the number of $a$ cards without reducing the number of marked $b$ cards.
\[K\bigl( (k_a,k_b), (k_a+1,k_b) \bigr) \. = \, \frac{2a^2\left(n -k_a\right)(k_a + k_b + 1)}{(2n)^2}\..
\]
The number of marked $b$ cards decreases if one hand lands on a marked $b$ card, the other on an unmarked $a$ card, and the chance to mark is unsuccessful. There are $ \left(n - k_a\right)k_b$ such pairs. The chance of picking each is \ts $\frac{2ab}{(2n)^2}$, and the chance the mark is moved is \ts $1 - \frac{a}{b}$.  We have: 
\[K\bigl( (k_a,k_b), (k_a+1,k_b-1) \bigr) \. = \, \frac{2a(b-a)\left(n - k_a\right)k_b}{(2n)^2} \.. 
\]
Thus, when there are $k_a$ marked $a$ cards, the geometric rate for the waiting time $T^{(a)}_{k_a+ 1} - T^{(a)}_{k_a}$ for a new $a$ card to be marked is
\[ p^{(a)}_{(k_a,k_b)} \. = \. \frac{2a\left(n-k_a\right)(ak_a + bk_b + a)}{(2n)^2} \..
\]
We can bound the above probability as follows. Since $c_1 > \frac 12$, $k_a + k_b \ge c_1N > n$. Further, since $a<b$, $ak_a+bk_b$ is minimized when $k_a$ is as large as possible. Setting $k_a = n$ gives $ak_a+bk_b \ge 2n(2c_1 - 1)$. Hence: 
\[ p^{(a)}_{(k_a,k_b)} = \frac{2a\left(n-k_a\right)(ak_a + bk_b + a)}{(2n)^2} \ge \frac{a(n-k_a)(2c_1-1)}{n}\..
\]

Using this estimate we can bound the expected time to mark all the $a$ cards during the second marking scheme as follows.

\begin{prop}
The expected time for all the $a$ cards to be marked is bounded as: 
\[ \mathbb{E}T^{(a)}_{n} \, \leq \, \frac{n}{a(2c_1 - 1)} \.\log (2c_1-1)n \. + \.O(n)\ts.\]
\end{prop}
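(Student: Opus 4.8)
The plan is to split the time $T^{(a)}_n$ according to the two marking schemes and treat the two pieces separately. The first scheme is short and will simply be bounded by its expectation; the second scheme is where the main term $\frac{n}{a(2c_1-1)}\log n$ arises, and it will be controlled using the rate estimate $p^{(a)}_{(k_a,k_b)}\ge\frac{a(2c_1-1)(n-k_a)}{n}$ established just above. Concretely, write $T^{(a)}_n\le T_{c_1N}+S$, where $S$ is the number of steps of the second scheme that elapse before all $a$ cards are marked (so $S=0$ if every $a$ card is already marked when the second scheme begins). By the computation in the proof of Proposition~\ref{tc1n}, $\mathbb{E}T_{c_1N}\le\frac{c_1}{a^2(1-c_1)}N=O(n)$, so the first scheme contributes only to the $O(n)$ error term, and it remains to show $\mathbb{E}S\le\frac{n}{a(2c_1-1)}\log n+O(n)$.

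For the second scheme I would first record two monotonicity facts, both immediate from the transition rules of $K$: the coordinate $k_a$ never decreases (the only transitions altering it, namely to $(k_a+1,k_b)$ and to $(k_a+1,k_b-1)$, increase $k_a$), so $k=k_a+k_b$ never decreases and hence the second scheme stays in force --- and the inequality $p^{(a)}_{(k_a,k_b)}\ge\frac{a(2c_1-1)(n-k_a)}{n}$ stays valid --- throughout $[T_{c_1N},T^{(a)}_n]$; and the only transition that leaves $k_a$ unchanged is $k_b\mapsto k_b+1$, so $k_b$ is non-decreasing while $k_a$ is frozen. The upshot is that at every step of the second scheme in which the number of marked $a$ cards equals $j<n$, the conditional probability, given everything so far, that this number increases is at least $q_j:=\frac{a(2c_1-1)(n-j)}{n}$, regardless of the current value of $k_b$. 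Consequently the waiting time to pass from $j$ to $j+1$ marked $a$ cards is stochastically dominated by a $\mathrm{Geom}(q_j)$ random variable, so $S$ itself is dominated by $\sum_{j=\kappa}^{n-1}G_j$ with $G_j\sim\mathrm{Geom}(q_j)$ independent and $\kappa$ the number of marked $a$ cards at time $T_{c_1N}$. Taking expectations and enlarging the range of summation gives
$$
\mathbb{E}S \;\le\; \sum_{j=0}^{n-1}\frac{1}{q_j} \;=\; \frac{n}{a(2c_1-1)}\sum_{m=1}^{n}\frac{1}{m} \;=\; \frac{n}{a(2c_1-1)}\log n + O(n),
$$
using $\sum_{m=1}^{n}\frac1m=\log n+O(1)$. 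Since $\log n$ and $\log\big((2c_1-1)n\big)$ differ by a constant, the right-hand side also equals $\frac{n}{a(2c_1-1)}\log\big((2c_1-1)n\big)+O(n)$, and together with $\mathbb{E}T_{c_1N}=O(n)$ this is the claimed bound. (If one prefers, $\kappa\ge(2c_1-1)n$ --- which follows from $c_1>\frac12$ and $k_b\le n$ --- lets one restrict the harmonic sum to fewer terms, but the $O(n)$ slack makes the precise constant inside the logarithm immaterial.)

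The only step that needs real care is the stochastic-domination claim: because the per-step success probability $p^{(a)}_{(k_a,k_b)}$ genuinely depends on $k_b$, one must first verify that while $k_a$ is frozen the chain can only move in the direction ($k_b$ increasing) along which the uniform lower bound $q_j$ continues to hold, and then invoke the standard fact that a run of trials, each succeeding with conditional probability at least $q$ given the past, has first-success time dominated by $\mathrm{Geom}(q)$, of mean at most $1/q$. Everything else --- replacing the first scheme by its mean and summing a harmonic series --- is routine bookkeeping.
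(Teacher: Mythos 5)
Your proof is correct and follows exactly the approach the paper leaves implicit: the proposition is stated directly after the pointwise rate bound $p^{(a)}_{(k_a,k_b)}\ge\frac{a(2c_1-1)(n-k_a)}{n}$, and the intended argument is the harmonic sum of reciprocal rates that you carry out. The extra bookkeeping you supply --- splitting off $\mathbb{E}T_{c_1N}=O(n)$, checking that $k_a+k_b$ is non-decreasing so the rate bound stays valid throughout the second scheme, invoking geometric stochastic domination for each waiting time, and noting that $\log n$ versus $\log\bigl((2c_1-1)n\bigr)$ is absorbed into $O(n)$ --- is precisely what the paper omits, and all of it is sound.
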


Its left to find an upper bound on when all the cards are marked. Continuing in the same vein as the approximation for $p^{(a)}$, we bound the transition probabilities from below by:

\begin{equation}\label{btp}
K((k_a,k_b), (k_a',k_b')) \geq \begin{cases} \frac{ab(2c_1-1)(n- k_b)}{n} & k_a'=k_a, k_b'=k_b+1 \\
\frac{a^2(2c_1-1)(n-k_a)}{n} & k_a' = k_a+1, k_b' = k_b\\
\frac{a(b-1)(2c_1-1)(n-k_a)}{n} & k_a' = k_a+1, k_b'=k_b-1
\end{cases}
\end{equation}

Letting $s(k_a,k_b)$ be the expected amount of time to go from $k_a$ marked $a$ cards and $k_b$ marked $b$ cards to all cards marked, clearly $s(n,n)=0$. The expected time for $k_a$, $k_b$ to change is at most the reciprocal of the sum of the transition probabilities in (\ref{btp}). According to the probabilities of what kind of new mark occurs, we add the expected time from these new values.
$$
\aligned 
s(k_a,k_b) \,  \leq \, & \frac{n}{a(2c_1-1)((n-k_a) + b(n-k_b))} \. + \. \frac{b(n-k_b)}{(n-k_a) + b(n-k_b)}\. s(k_a,k_b+1)\\
& +\frac{a(n-k_a)}{(n-k_a) + b(n-k_b)}\. s(k_a+1,k_b) \. + \. \frac{(b-1)(n-k_a)}{(n-k_a) \. + \. b(n-k_b)}\. s(k_a+1,k_b-1)\ts.
\endaligned
$$

Since the fraction \. $\frac{n}{a(2c_1-1)}$ \. appears in all constant factors, we can factor it out and simplify the recurrence to:
$$\aligned
\tilde{s}(k_a,k_b) \, & = \, \frac{1}{n-k_a + b(n-k_b)}\Bigl[1 + b(n-k_b)\tilde{s}(k_a,k_b+1) \\ 
& \qquad \qquad + (n-k_a)\bigl(a\tilde{s}(k_a+1,k_b) + (b-1)\tilde{s}(k_a+1,k_b-1)\bigr)\Bigr]\ts.
\endaligned
$$
Since the first marking scheme marked cards irrespective of whether they were $a$ or $b$ cards, giving a binomial distribution to the number of marked $a$ and $b$ cards at the beginning of the second scheme, we are interested in: 
\[\sum_{(n-k_a) + (n-k_b) \. = \. (1-c_1)2n} { 2n(1-c_1) \choose n- k_a}\. 2^{-2n(1-c_1)} \. \frac{n}{a(2c_1-1)} \. \tilde{s}(k_a,k_b)\ts.\]

\noindent
Small examples indicate that, irrespective of the value of $b$, this may be exactly \ts $H\bigl((1-c_1)2n\bigr)$, 
where \ts $H(n) = 1 + \frac{1}{2} + \ldots + \frac{1}{n}$ \ts is the $n$-th harmonic number.

Instead, we will translate the problem into a new coupon collector frame work and show the $b$ cards are expected to be marked by an additional $O(n\log\log n)$ steps after the $a$ cards are marked. This approach bounds $s(k_a,k_b)$ for every starting condition for the second scheme, and does not make use of the binomial distribution of $k_a,k_b$ at the start of the second scheme. We can view the bounded transition probabilities in (\ref{btp}) as touching a single card where unmarked $b$ cards are touched with probability 
$$\frac{ab(2c_1-1)}{n}\.,$$ 
and touching each unmarked $a$ card with probability 
$$\frac{a(2c_1-1)}{n}\ts.$$ 
When touched, $b$ cards are always marked, while $a$ cards are marked with probability $a$ and turned into $b$ cards with probability $b-1$ (note that $a + b-1 = 1$). The expected time for all $(n-k_b)$ original $b$ cards to be touched is 
$$
\frac{n}{ab(2c_1-1)} \ts \bigl((\log n-k_b)+ O(1)\bigr)\ts.
$$ 
As for the $(n-k_a)$ original unmarked $a$ cards, they are marked faster than if the cards are touched with probability $\frac{a(2c_1-1)}{n}$, even after being turned into $b$ cards, and marked only after the second touch. A result of Newmann and Shepp extends the coupon collector problem with $n$ coupons to the problem of collecting $m$ of each coupon, with expected time $n\log n + (m-1)n\log\log n + O(n)$, see~\cite{NS}. Therefore, the expected time to mark all the original $a$ cards it at most 
$$\frac{n}{a(2c_1-1)}\. \bigl[\log n-k_a + \log\log n + O(1)\bigr] \ \. \text{steps.}
$$ 
Therefore, for every $k_a$ and $k_b$ at $T_{2c_1n}$, we have:
$$\mathbb{E}(T_{2n} - T_{2c_1n}) \, \le \, \frac{n}{a(2c_1-1)}\ts \bigl[\log 2n \. + \. \log\log 2n \. + \.O(1)\bigr]\ts.
$$

Note that selecting $c_1 = \frac{1}{2}\left(1 + \frac{1}{1+ \epsilon}\right)$ ensures that $\frac{1}{2c_1 - 1} \leq 1 + \epsilon$. The actual choice of $c_1$ will occur in Lemma~\ref{ubf} and accounts for the lower order terms. It remains to bound the variance of $T_N - T_{c_1N}$.

\subsection{Variance Bound}

For $k > c_1N$, the times between marking cards, $T_{k+1} - T_k$, are not independent as they depend on how many $a$ and $b$ cards are currently marked. However, when on a diagonal $k_a + k_b = k$, the rate of advancing to $k+1$ marked cards is slowest when $k_a$ is small. In steps in which no new card is marked, either an mark is moved from an $b$ card to an $a$ card, increasing all future rates, or nothing happens. This means the times $T_{k+1}-T_k$ are negative correlated.

\begin{prop}\label{varb}
We have:
\[ \Var(T_{N} - T_{c_1N}) \, \leq \, \frac{\pi^2}{6} \frac{N^2}{a^4c_1^2}\..\]
\end{prop}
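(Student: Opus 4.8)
The plan is to telescope $T_N - T_{c_1N} = \sum_{k=c_1N}^{N-1} X_k$, where $X_k := T_{k+1}-T_k$ is the number of steps spent with exactly $k$ marked cards, and to combine two inputs. First, that the increments $X_k$ are pairwise non-positively correlated, so that $\Var(T_N-T_{c_1N}) \le \sum_{k=c_1N}^{N-1}\Var(X_k)$; and second, a bound $\Var(X_k) = O\bigl(N^2/(N-k)^2\bigr)$ obtained by stochastic comparison of $X_k$ with a geometric variable, after which summing a convergent series finishes the job. It is worth stressing that sub-additivity of the $L^2$-norm alone is useless here: it would only give $\bigl(\sum_k\sqrt{\Var X_k}\bigr)^2$, which is of order $(N\log N)^2$, so the non-positivity of the covariances is genuinely needed and is the delicate point.

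For the second input, fix $k$ with $c_1N \le k < N$. Throughout the $k$-th epoch $[T_k,T_{k+1})$ the total number of marked cards equals $k$, and the only transition of the second scheme that changes $(k_a,k_b)$ without ending the epoch moves a mark from a $b$ card to an $a$ card; hence inside the epoch $k_a$ can only increase while $k_a+k_b=k$ is fixed. The probability that a given step ends the epoch is
\[
K\bigl((k_a,k_b),(k_a,k_b+1)\bigr) + K\bigl((k_a,k_b),(k_a+1,k_b)\bigr) \, = \, \frac{2a(k+1)\bigl(a(n-k_a)+b(n-k_b)\bigr)}{(2n)^2} \ \ge \ \frac{2a^2 c_1(N-k)}{N} \, =: \, p_k \,,
\]
using $a\le b$, $(n-k_a)+(n-k_b)=N-k$, $k+1\ge c_1N$, and $(2n)^2=N^2$. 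Since this bound holds uniformly over the epoch, $X_k$ is stochastically dominated by a geometric random variable with success probability $p_k$, whence $\Var(X_k)\le \mathbb{E}[X_k^2]\le 2/p_k^2$. Summing over $k$ and writing $j=N-k$,
\[
\Var(T_N - T_{c_1N}) \ \le \ \sum_{k=c_1N}^{N-1}\frac{2}{p_k^2} \ = \ \frac{N^2}{2a^4c_1^2}\sum_{j=1}^{(1-c_1)N}\frac{1}{j^2} \ \le \ \frac{\pi^2}{6}\,\frac{N^2}{a^4c_1^2}\,.
\]

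For the first input, the mechanism is the monotonicity recorded above: with $k_b=k-k_a$ the epoch-ending probability equals $2a(k+1)\bigl((a+b)n - bk + (b-a)k_a\bigr)/(2n)^2$, which is non-decreasing in $k_a$ because $b\ge a$, while the non-ending transitions never decrease $k_a$ and never change $k$. To turn this into $\operatorname{Cov}(X_j,X_k)\le 0$ for $j<k$, I would argue that conditioning on a larger value of $X_j$ forces more non-ending steps during epoch $j$, which stochastically raises $k_a$ at time $T_{j+1}$, and that --- via a monotone coupling of two runs of the second scheme started from suitably ordered states --- beginning epoch $j+1$ from a higher $k_a$ keeps the state "more advanced" at every later marked-count and hence makes each subsequent $X_k$ stochastically smaller. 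This makes $s\mapsto \mathbb{E}[X_k\mid X_j=s]$ non-increasing, so $\operatorname{Cov}(X_j,X_k)=\operatorname{Cov}\bigl(X_j,\,\mathbb{E}[X_k\mid X_j]\bigr)\le 0$, and combining with the last display proves the Proposition.

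The main obstacle is exactly this monotone coupling: one must identify a partial order on the states $(k_a,k_b)$ that is preserved by \emph{every} transition rule of the second scheme --- in particular by the epoch-ending steps, where the new mark may land on an $a$ card in one run and a $b$ card in the other, so that the naive order "more marked $a$ cards" is not obviously maintained --- and under which "more advanced" entails "no slower from now on". Granting such a coupling, the two displayed bounds combine to give the stated estimate.
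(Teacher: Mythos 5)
Your proposal follows essentially the same route as the paper: write $T_N - T_{c_1N} = \sum_k (T_{k+1}-T_k)$, invoke negative correlation of the increments to reduce to $\sum_k \Var(T_{k+1}-T_k)$, bound each term by comparison with a geometric variable whose success probability is a uniform lower bound on the epoch-ending rate, and sum the resulting $1/(N-k)^2$ series. Your explicit lower bound $p_k = 2a^2c_1(N-k)/N$ on the marking rate and the estimate $\Var(X_k)\le 2/p_k^2$ are correct and in fact give a constant $\pi^2/12$, slightly better than claimed.

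On the one point you flag as the ``delicate'' step --- that $\Cov(X_j,X_k)\le 0$, which you trace to the need for a monotone coupling of the $(k_a,k_b)$ chain --- you are right that this is the load-bearing assertion, and right that it is not closed by the heuristic ``non-ending steps only raise $k_a$, which only raises future rates.'' But the paper's own treatment is at the same level of rigor: it records exactly this monotonicity remark in the paragraph preceding the proposition and then begins the proof with ``Note that $\Var(T_N - T_{c_1N}) \le \sum_{k>c_1N}\Var(T_{k+1}-T_k)$'' without constructing a coupling. So your proposal reproduces the paper's argument, and you have correctly identified the one step that both treatments leave informal rather than introducing a new gap of your own.
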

\begin{proof}
Note that: 
\[ \Var(T_{N} - T_{c_1N}) \, \leq \, \sum_{k>c_1N}\Var(T_{k+1} - T_k) \]
The largest variance happens when only $a$ cards are unmarked, so $\Var(T_{k+1} - T_k) \leq \Var(Y)$ where $Y \sim Geo(p^{(a)}_{\left(k- n,n\right)})$.
Therefore, 
\begin{align*}  \Var(T_{N} - T_{c_1N}) &\leq \sum_{(c_1 - 1/2)N \leq k_a \leq n -1} \frac{(2n)^2}{a^4 (2n-k_a)^2 c_1^2}\left( 1 - \frac{a^2(2n-k_a)c_1}{2n}\right) \\
& \leq \frac{(2n)^2}{a^4c_1^2}\sum_{(c_1 - 1/2)2n \leq k_a \leq n -1} \frac{1}{k_a^2} \ \, \leq \ \, \frac{\pi^2}{6} \frac{(2n)^2}{a^4c_1^2}\.,
\end{align*}
which completes the proof. 
\end{proof}

We arrive at the upper bounded needed for cutoff.

\begin{lemma}\label{ubf}
For every $\epsilon>0$, we have:
\[ P\left(T_{N} > (1 + \epsilon)\frac{1}{2a}N\log N \right) \, = \, o(1)\ts.\]
\end{lemma}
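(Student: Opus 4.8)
The plan is to combine the two marking phases additively and control each contribution separately. Write $T_N = T_{c_1 N} + (T_N - T_{c_1 N})$. For the first phase, Proposition~\ref{tc1n} already gives $P(T_{c_1 N} > N\log\log N) \le C/\log\log N = o(1)$, so with probability $1 - o(1)$ the first phase costs only $O(N\log\log N) = o(N\log N)$ steps. The real work is the second phase: I want to show $T_N - T_{c_1 N}$ is, with probability $1 - o(1)$, at most $(1+\epsilon)\frac{1}{2a} N\log N$ minus a bit of slack to absorb the first phase. From the coupon-collector analysis of the second scheme we have the bound $\mathbb{E}(T_N - T_{c_1 N}) \le \frac{n}{a(2c_1 - 1)}\bigl[\log 2n + \log\log 2n + O(1)\bigr]$, valid for every starting configuration $(k_a, k_b)$ at time $T_{c_1 N}$ (this is the key point — it does not use the binomial distribution of the split, so conditioning on the first phase is harmless). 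Recalling $n = N/2$, this is $\frac{N}{2a(2c_1-1)}\bigl[\log N + O(\log\log N)\bigr]$.

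Next I would make the choice of $c_1$ explicit. Given $\epsilon > 0$, pick $c_1 = c_1(\epsilon) \in (1/2, 1)$ close enough to $1$ that $\frac{1}{2c_1 - 1} \le 1 + \epsilon/3$ (e.g. $c_1 = \frac{1}{2}(1 + \frac{1}{1+\epsilon/3})$). Then the mean of the second phase is at most $(1 + \epsilon/3)\frac{1}{2a} N\log N + O(N\log\log N) \le (1 + \epsilon/2)\frac{1}{2a} N\log N$ for $N$ large. Now apply Chebyshev's inequality using Proposition~\ref{varb}, which gives $\Var(T_N - T_{c_1 N}) \le \frac{\pi^2}{6}\frac{N^2}{a^4 c_1^2} = O(N^2)$. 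Since the target deviation $(\epsilon/2)\frac{1}{2a} N\log N$ is of order $N\log N$, which dominates $\sqrt{\Var} = O(N)$, Chebyshev yields
\[
P\Bigl(T_N - T_{c_1 N} > (1+\epsilon)\tfrac{1}{2a} N\log N - N\log\log N\Bigr) \le \frac{\Var(T_N - T_{c_1 N})}{\bigl((\epsilon/2)\frac{1}{2a} N\log N - N\log\log N\bigr)^2} = O\!\left(\frac{1}{(\log N)^2}\right) = o(1).
\]
Here I have subtracted $N\log\log N$ on the left to leave room for the first phase.

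Finally, I would union-bound: on the complement of the two bad events — which has probability $1 - o(1)$ by the two estimates above — we have $T_{c_1 N} \le N\log\log N$ and $T_N - T_{c_1 N} \le (1+\epsilon)\frac{1}{2a} N\log N - N\log\log N$, hence $T_N \le (1+\epsilon)\frac{1}{2a} N\log N$, which is exactly the claim. The main obstacle is making sure the second-phase mean bound really is uniform over all configurations $(k_a, k_b)$ reachable at $T_{c_1 N}$ — that is what lets us treat $T_{c_1 N}$ and the conditional law of the second phase independently and simply add the two tail bounds — but the excerpt has already arranged the coupon-collector argument so that this uniformity holds, so the remaining work is just bookkeeping: choosing $c_1$ in terms of $\epsilon$, checking the lower-order terms are absorbed, and applying Markov (first phase) and Chebyshev (second phase). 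I would also remark that $\Var = O(N^2)$ comfortably beating the $\Theta(N^2\log^2 N)$ squared-deviation scale is what makes the concentration painless; no sharper second-moment or martingale argument is needed.
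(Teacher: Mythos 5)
Your proof is correct and follows essentially the same approach as the paper: decompose $T_N$ into the two phases, control the first via Markov's inequality (Proposition~\ref{tc1n}), control the second via Chebyshev using the expectation bound and Proposition~\ref{varb}, and choose $c_1$ close enough to $1$ so that $\frac{1}{2c_1-1}$ is absorbed into $1+\epsilon$. The only cosmetic difference is the deviation scale in Chebyshev (you use $\Theta(N\log N)$ giving $O(1/(\log N)^2)$; the paper uses $\sqrt{\Var}\log\log N$ giving $O(1/(\log\log N)^2)$), which affects the rate of the $o(1)$ but not the conclusion.
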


\begin{proof} 
The Chebychev inequality, Proposition~\ref{tc1n} and the variance bound in equation~\ref{varb} give:

\[ P\left(T_{N} - T_{c_1N} > \frac{N}{(2c_1 - 1)\ts 2a}\log \frac{(2c_1-1)\ts N}{2} + C_2N + \sqrt{\frac{\pi^2}{6} \frac{N^2}{a^4c_1^2}}\. \log\log N \right) \, \leq \, \frac{1}{\log\log^2(N)}\..\]
Given $\epsilon>0$, choose $c_1$ so that,
\[ \frac{N\log N}{(2c_1 -1)\ts 2a} \. + \. \frac{N \ts \log (2c_1-1)/2 }{(2c_1-1)\ts 2a}\. + \. C_2N \. + \. 
\frac{a^2\pi}{c_1\sqrt{6}}\log\log N \. + \. \frac{C}{\log\log N} \leq \frac{(1 + \epsilon)}{2a}\. N\log N \..\]
Then:
$$P\left(T_{N} \, > \, (1 + \epsilon)\. \frac{1}{2a}\. N\log N \right) \, \leq \, \frac{2}{(\log\log n)^2}\,, 
$$
as desired. 
\end{proof}

\bigskip

\section{Lower Bound}

Let $A_K$ be the permutations of $2n$ with at least $K$ $a$ cards as fixed points. The lower bound will arise from bounding the size of $A_K$ through counting and $P^{*t}(A_K)$ using a coupon collector argument. This gives a lower bound on total variation distance as, if $U$ is the uniform distribution,

\[ ||P^{*t} - U||_{TV} \, \geq \, |P^{*t}(A_K) - U(A_K)|.\]

The original lower bound of Diaconis and Shahshahani for the transposition walk uses the permutations with no fixed points as a bad set that is less likely than it should be. Since this set has size the number of derangements of $N$, which is asymptotically $\frac{1}{e}$ of the permutations, it can only be used to show the total variation distance is bounded away from $0$ by that proportion. The argument here extends this bad set argument using the complement of $A_K$ (or $A_K$ as a too good set), using larger bad sets.

To use coupon collector, consider after $t$ steps the set of touched cards $\{R_1,L_1,...,R_{t},L_{t}\}$. The chance that each $R_t$ or $L_t$ is an $a$ card is $\frac{a}{2n}$. Let $\tau_{n-K}$ be the first time $n - K$ $a$ cards have been touched as either $R_t$ or $L_t$ in the Markov chain. Before $\tau_{n-K}$, there are at least $K$ $a$ cards in their original position, so the cards are in an arrangement in $A_K$. Using that there are ${2n \choose k} d(2n-k)$ permutations with $k$ fixed points, where 
$$d(2n) \. = \. (2n)! \. \sum_{i=0}^{2n} \. \frac{(-1)^i}{i!}$$ 
is the number of derangements of $2n$, it follows that:
\[ 
P(A_K^C) \, = \, \sum_{k=0}^{K-1} \. \frac{{2n \choose n}\. (n)! \ts {n \choose k}d(n-k)}{(2n)!} \, = \, 
\sum_{k=0}^{K-1} \. \sum_{i=0}^{n-k} \. \frac{(-1)^i}{i!k!}\,.
\]
For every constant $0<\delta<1$, by the rapid convergence of the Taylor series of $e^x$, setting $K = (2n)^\delta$ gives $P(A_K) \to 0$, as $n \rightarrow \infty$.

If we re-index $\{R_1,L_1,...,R_{t},L_{t}\} = \{C_1,...,C_{2t}\}$, let $\tilde{\tau}_{n-K}$ be the first $s$ such that $\{C_1,...,C_s\}$ contains at least $n-K$ $a$ cards. Coupon collector will be easier to state will $\tilde{\tau}$, and we can recover $\tau$ as $\tau_{n-K} = \lceil\frac{1}{2}\tilde{\tau}_{n-K}\rceil$. The difference $\tilde{\tau}_{i+1} - \tilde{\tau}_i$ are geometric with success rate $\frac{a\left(n-i\right)}{2n}$. Therefore, the expected value and variance of $\tilde{\tau}$ satisfy:
\[ \mathbb{E} \tilde{\tau}_{n-K}  \, = \, \frac{2n}{an} + \ldots + \frac{2n}{a\left(K+1\right)} \, = \, \frac{2n}{a}\left(H_{n} - H_{K}\right),\]
\[ \Var(\tilde{\tau}_{n-K})\, \leq \, \left(\frac{2n}{an}\right)^2 + \ldots + \left(\frac{2n}{a\left(K+1\right)}\right)^2 
\, \leq \, \frac{(2n)^2}{a^2}\frac{\pi^2}{6}\..
\]
Using Chebeychev's inequality, with \ts $K = (2n)^\delta$, for \ts $\delta = \frac{\epsilon}{2}$ \ts and \ts $c = \frac{\epsilon}{2}\log 2n$, this gives:

\[ P\left( |\tau_{n-n^\delta} - \frac{2n}{2a}\left( H_{n} - H_{(2n)^\delta}\right)| 
\, \geq \, \frac{\epsilon}{2}\frac{2n}{2a}\log n \right) \, \leq \, \frac{4a \pi^2}{3\epsilon^2 (\log 2n)^2} \,.
\]
Therefore, $\lim_{n \rightarrow \infty} P(\left(\tau_{n - (2n)^\delta}) < (1 - \epsilon) \frac{2n}{2a} \log 2n\right) =0$ and so for $K = (2n)^{\epsilon/2}$,  \[
\lim_{n \rightarrow \infty} P^{*(1-\epsilon) \frac{2n}{a} \log 2n}(A_k) \. = \. 1\ts,
\] while $U(A_{K}) \rightarrow 0$.
This gives the lower bound on total variation distance needed for cutoff.

\bigskip

\section{Proof of Strong Uniformity}\label{psu}
As in Matthews's original analysis, the proof of strong uniformity will be divided into two parts.
To analyze the first marking scheme, we will track the values and locations of the marked cards separately. Under this first scheme, the randomness in the list of values of the marked cards (ordered by time of marking) is the primary source of uniformity. Next we will show, when the cards are being marked in either of the schemes, the marginal of the marked cards is invariant under permutations. The latter property implies that the first time all cards have been marked is a strong uniform time.

Writing $\pi_t = (R_t L_t) (R_{t-1} L_{t-1} )\cdots (R_1 L_1)$ where multiplication is from right to left, this is a map from locations to values after $t$ steps of the walk. As in Matthews's original proof, we will track the marked cards using two permutations in $S_n$, $\phi_t$ and $\psi_t$ which will denote the labels and positions, respectively, in order of marking of the marked cards (and an order for the remaining cards to be defined), such that $\pi_t = \phi_t \psi_t^{-1}$. If $k$ cards have been marked at time $t$, $(\phi(1),\ldots,\phi(k))$ will be the labels of the marked cards in the order they were marked in and $(\psi(1),\ldots,\phi(k))$ their locations. We will use the same choice of $\phi$ and $\psi$ as in Matthews's proof, except for a modification to keep $\phi_t$ constant whenever a new card is not marked.

 Matthews's orginal proof showed that both of these order $k$-tuples are uniformly distributed independent subsets of $[2n]$ of size $k$. This is no longer true in the biased case considered here, as the locations will be biased by the non-marking steps of the walk. Instead, we will show $(\phi(1),\ldots,\phi(k))$ is a uniformly distributed subset of $[2n]$ of size $k$, and $(\phi(1),\ldots,\phi(k))$ and $(\psi(1),\ldots,\psi(k))$ are independent.

The walk then maps by \ts $\pi_t = \phi \psi^{-1}$ \ts the locations \ts $(\psi(1),\ldots,\psi(k))$ \ts to the cards labeled \ts 
$(\phi(1),\ldots,\phi(k))$. Since these lists are independent, fixing \ts $(\psi(1),\ldots,\psi(k))$, does not change the distribution of $\phi$. For values of $\{\phi(1),\ldots,\phi(k)\}$ a fixed $k$-subset of $[2n]$, each of the orders are equally likely. Therefore, if the assumption on the distributions of $\phi$ and $\psi$ hold, the permutation of marked card positions to values is uniformly distributed for each set of positions and values.

\begin{prop}\label{s1sst}
For each $t$ such that $k = k_t < c_1N$, there exist permutations of $[2n]$, $\phi_t$ and $\psi_t$ such that $\pi_t  = \phi_t \psi_t^{-1}$. Further, $(\phi(1),\ldots,\phi(k))$ is a uniformly distributed $k$-tuple of $[2n]$, and $(\phi(1),\ldots,\phi(k))$ and $(\psi(1),\ldots,\psi(k))$ are independent.
\end{prop}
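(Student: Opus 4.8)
The plan is to prove the proposition by induction on $t$, carrying an inductive statement somewhat stronger than the proposition itself. Write $V_j$ for the set of ordered $j$-tuples of distinct elements of $[2n]$, and set $\Phi_t=(\phi_t(1),\dots,\phi_t(k_t))$, $\Psi_t=(\psi_t(1),\dots,\psi_t(k_t))$. First I would fix once and for all a rule for the ``remaining'' entries of $\psi_t$ (say, the unmarked positions in increasing order), so that $\phi_t$ is then forced by $\pi_t=\phi_t\psi_t^{-1}$. The one-step update is: if step $t+1$ marks no new card we keep $\phi_{t+1}=\phi_t$, which forces $\psi_{t+1}=\psi_t\cdot\bigl(\phi_t^{-1}(R_{t+1})\ \phi_t^{-1}(L_{t+1})\bigr)$; if step $t+1$ marks the (unmarked) card $R_{t+1}$, we place $R_{t+1}$ in slot $k_t+1$ of $\phi$ and adjust $\psi$ correspondingly, so that the new slot of $\psi$ records the new location of $R_{t+1}$, namely the old location $\pi_t^{-1}(L_{t+1})$ of $L_{t+1}$. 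The base case $t=0$ (empty tuples) is trivial.

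The substance of the argument is the behaviour at a marking step, and this is exactly where the acceptance probability $a^2/(p(R_t)p(L_t))$ is engineered. Conditioned on the history $\mathcal F_t$ up to time $t$, for unmarked labels $x,y$ one has
$$
P\bigl(R_{t+1}=x,\,L_{t+1}=y,\,\text{mark succeeds}\,\bigm|\,\mathcal F_t\bigr)\;=\;\frac{p(x)}{2n}\cdot\frac{p(y)}{2n}\cdot\frac{a^2}{p(x)p(y)}\;=\;\frac{a^2}{(2n)^2},
$$
independent of $x$ and $y$. Hence, conditioned on $\mathcal F_t$ and on the event that step $t+1$ marks a card, the pair $\bigl(R_{t+1},\,\pi_t^{-1}(L_{t+1})\bigr)$ — the new marked value and its new position — is uniform over (currently unmarked values) $\times$ (currently unmarked positions), a product law, and these conditional distributions depend on $\mathcal F_t$ only through the two unmarked sets. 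Since $P(\text{step }t+1\text{ is a marking step}\mid\mathcal F_t)$ depends on $\mathcal F_t$ only through $k_t$, conditioning on this event does not disturb the inductive law of $(\Phi_t,\Psi_t)$. A direct computation — writing $P(\Phi_{t+1}=\vec v,\Psi_{t+1}=\vec q)$ in terms of $P(\Phi_t=\vec v\,',\Psi_t=\vec q\,')$ for the truncated tuples times the chance of appending the last coordinates, and using $|V_{k}|(2n-k)=|V_{k+1}|$ — then shows that ``$\Phi$ uniform on $V_{k}$ and $\Phi\perp\Psi$'' is preserved across a marking step.

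I expect the main obstacle to be the non-marking steps. There $\Phi_{t+1}=\Phi_t$ is unchanged, but $\Psi_{t+1}$ is obtained from $\Psi_t$ by a transposition of two of its slots which is itself determined by $\Phi_t$ (through $\phi_t^{-1}(R_{t+1})$, $\phi_t^{-1}(L_{t+1})$); and when exactly one of $R_{t+1},L_{t+1}$ is a marked label, the displaced marked card moves to $\pi_t^{-1}$ of the other label — an unmarked position whose distribution is sensitive to the bias of the unmarked configuration. To carry the induction through one must therefore strengthen the inductive hypothesis to also control this: the right formulation is that, conditioned on $(k_t,T_1,\dots,T_{k_t})$, the value-list $\Phi_t$ is uniform on $V_{k_t}$ and remains uniform even after conditioning on the full position data $(\Psi_t,\ \pi_t|_{\text{unmarked positions}})$ — in other words the marked-value layer is an independent uniform overlay on the (biased) position process. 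One then verifies that a non-marking step preserves this: the pair $(R_{t+1},L_{t+1})$ is drawn with probability $p(R_{t+1})p(L_{t+1})/(2n)^2$, adjusted by $1-a^2/(p(R_{t+1})p(L_{t+1}))$ on a failed marking attempt, so the effect of an unmarked label on the update enters only through its \emph{type}; summing over the unmarked labels consistent with the conditioning, the resulting update of the position data is governed by data that does not see the identities of the marked cards, so $\Phi_{t+1}$ stays uniform and independent of it. This is precisely the bookkeeping that must follow Matthews' original argument carefully, and it is where the modification ``keep $\phi_t$ constant on non-marking steps'' is used — it guarantees the only randomness feeding into $\Phi_t$ is created at marking steps, where it is manifestly unbiased. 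Granting the strengthened hypothesis for all $t$ with $k_t<c_1N$, the proposition follows at once; and, as remarked just before its statement, combining the independence of $\Phi_t$ and $\Psi_t$ with the fact that for fixed marked positions and values every bijection between them is equally likely gives that $\pi_t$ restricted to the marked positions is uniformly distributed.
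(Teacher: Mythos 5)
Your overall plan — induction on $t$, tracking ordered value/position lists $\Phi_t,\Psi_t$ with $\pi_t = \phi_t\psi_t^{-1}$, updating $\Psi$ by $(\phi_t^{-1}(R_{t+1})\ \phi_t^{-1}(L_{t+1}))$ on non-marking steps and appending $(R_{t+1},\pi_t^{-1}(L_{t+1}))$ on marking steps — is exactly the paper's approach, and your analysis of the marking step (the acceptance probability cancels the bias, so the new value/position pair is uniform on unmarked values $\times$ unmarked positions) matches the paper's argument. Your instinct that the non-marking steps are the genuinely delicate part is also correct: the paper's own treatment there is terse, observing only that $R_t^* = \phi_{t-1}^{-1}(R_t)$ is \emph{marginally} uniform on $[k]$ and that the three sub-cases (two marked, one marked one unmarked, two unmarked) each preserve the desired independence.

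However, there is a concrete gap in your resolution of that difficulty. The strengthened hypothesis you propose — that $\Phi_t$ stays uniform even after conditioning on $(\Psi_t,\,\pi_t\vert_{\text{unmarked positions}})$ — does not appear to survive a one-marked-one-unmarked step, and in fact cannot be carried through the induction, where one would need it with the full position history. Conditioning on the position history of both the marked list and the unmarked arrangement reveals which slot $R_t^*\in[k]$ of $\Psi$ was just overwritten (it is the unique slot whose entry changed), and the value sitting in that slot, $\phi_{t}(R_t^*)$, is then biased: conditional on $R_t^*=i$, $\phi_t(i)$ is distributed proportionally to $p(\cdot)$ on the marked set, not uniformly. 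So "the marked-value layer is an independent uniform overlay on the position process'' is too strong. Relatedly, your closing sentence locates the source of bias in the \emph{unmarked} label $L_{t+1}$ (``the effect of an unmarked label \ldots enters only through its type''), but the actual $\phi$-dependence is created by the \emph{marked} label $R_{t+1}$, whose selection probability $p(R_{t+1})/\sum_{\text{marked}}p$ depends on $\phi$ and not just on the marked set. The phrase ``One then verifies \ldots'' skips precisely this step. To repair the argument one needs a weaker invariant — roughly, that the $S_k$-exchangeability of $(\Phi_t, R_t^*)$ (the joint law is invariant under simultaneously permuting the marked slots) combined with exchangeability of the $\Psi_t$-list is enough, so that the slot index $R_t^*$, while not independent of $\Phi_t$, acts on $\Psi$ in a way that averages out — and this bookkeeping is what your proposal, like the paper's proof, leaves implicit.
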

\begin{proof}

For $t=0$, $\psi_0 = \phi_0 = \pi_0 = \id$ with $k_0 =0$, so the statement holds vacuously.

For $t>0$, assume by induction that the claim is true up to time $t-1$. Below, we will define $\phi_t$, $\psi_t$ in terms of $\phi_{t-1}$ and $\psi_{t-1}$. The first case we consider is that $k= k_t>k_{t-1}$, that is, a new card was marked, meaning $R_t$ and $L_t$ are unmarked cards at time $t-1$ and a coin flip with probability of success $\frac{a^2}{p(R_t)p(L_t)}$ succeeded. Let $R_t^{*} = \phi_{t-1}^{-1}(R_t) $ and $ L_t^* = \phi_{t-1}^{-1}(L_t)$. Since $\phi_{t-1}([k-1])$ were the marked cards at time $t-1$, $R_t^{*}, L_t^*  >k_{t-1}$. Given that the marking succeeded,
$R_t$ and $L_t$ are uniformly and independently distributed on the unmarked cards since the chance of the marking succeeding is inversely proportional precisely to the product of the probabilities of choosing the unmarked cards. Since $\phi_{t-1}$ is a fixed permutation. Further, this implies $R_t^*$ and $L_t^*$ are uniformly and independently distributed on $\{k,\ldots,2n\}$. We have:
\[ \pi_t \. = \. (R_t L_t) \phi_{t-1} \psi_{t-1}^{-1} \. = \. \phi_{t-1} (R_t^* L_t^*) \psi_{t-1}^{-1}\ts.
\]

Define $\psi_t = \psi_{t-1}(k L_t^*)$. If $L_t^*$ or $R_t^*$ is $k$, write $(R_t^* L_t^*) = (k R_t^*)(k L_t^*)$, 
and so form $\phi_t = \phi_{t-1}(k R_t^*)$. Otherwise, $(R_t^* L_t^*) = (k R_t^*)(R_t^* L_t^*) (k L_t^*)$, 
and so form $\phi_t = \phi_{t-1}(k R_t^*)(R_t^* L_t^*)$.

The first $k-1$ values of $\phi$ and $\psi$ are unchanged at $t$ versus $t-1$, with $\phi_t(k) = R_k$, $\psi_t(k) = L_k^*$. The uniformity and independence of $R_t$ and $L_t$ along with the induction hypothesis suffice to show the lists for $\phi$ and $\psi$ have the desired properties.

When a new card is not marked, it breaks into three cases of whether two marked cards were moved, one marked and one unmarked, or two unmarked with a failed marking. In all cases let $\phi_t = \phi_{t-1}$ and $\psi_t = \psi_{t-1}(R_t^* L_t^*)$. Clearly $(\phi_t(1),\ldots,\phi_t(k))$ is still uniformly distributed since it is the same as for $t-1$. It remains to show the desired independence between $(\phi_t(1),\ldots,\phi_t(k))$ and $(\psi_t(1),\ldots,\psi_t(k))$ in each case.

If $R_t$ and $L_t$ are both marked cards, they both appear in $(\phi_{t_1}(1),...,\phi_{t-1}(k))$. By the uniformity of the distribution of $\phi$,  $R_t^{*} = \phi_{t-1}^{-1}(R_t)$ and $L_t^{*} = \phi_{t-1}^{-1}(L_t)$ are i.i.d picks from $[k]$. This acts on $\{\psi_t(1),\ldots,\psi_t(k)\}$ as a uniformly random transposition having removed the bias. This does not affect the independence between the sequences.

If one card is marked and the other is unmarked, without loss of generality, assume $R_t$ is marked, and $L_t$ is unmarked. Then $R_t^*$ is an uniform choice from $[k]$ and $L_t^* \in \{k+1,...n\}$ and the two are independent. The permutation $\psi_t = \psi_{t-1} (L_t^* R_t^*) = (\psi(L_t^*) \psi(R_t^*)) \psi_{t-1}$ replaces $\psi_{t-1}{R_t^*}$ with $\psi_{t-1}(L_t^*)$ in the list $(\psi(1),...,\psi(k))$. Since $R_t^*$ and $L_t^*$ are independent, the list is still independent from the list for $\phi$.

If $R_t$ and $L_t$ are both unmarked cards, then $R_t^*, L_t^* \notin [k]$ and $\psi(R_t*), \psi(L_t*) \notin \{\psi(1),...\psi(k)\}$, and the sequence of the first $k$ values of $\psi$ is unchanged by appending $(R_t^* L_t^*)$ to the right of $\psi$.

\end{proof}

\begin{prop}\label{s2sst}
Given that $k$ cards are marked at time $t$, the marginal distribution of the marked cards is invariant under permutation.
\end{prop}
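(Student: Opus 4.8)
The plan is to prove the invariance by induction on $t$, exactly parallel to the structure of Proposition~\ref{s1sst}, but now tracking only the marginal law of the (unordered) set of marked cards together with their labels, conditioned on $t$ and on the number $k$ of marked cards (and, in the second scheme, on the values $k_a,k_b$). The inductive hypothesis is: conditioned on all this data, the set of labels on the marked cards is uniform among all $k$-subsets of $[2n]$ with the prescribed number of $a$-labels and $b$-labels; equivalently, applying any permutation of $[2n]$ that fixes the card types to the labels leaves the conditional distribution unchanged. The base case $t=0$ is vacuous since no cards are marked. For the inductive step one splits on which clause of the marking scheme fires at step $t+1$, and in each clause one checks that the update is "type-symmetric": the acceptance/move probabilities were chosen precisely so that, after conditioning on the event that a given transition occurred, the identity of the card(s) involved is uniform among the cards of the relevant type.

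The key computations are the following. In clauses (1)--(3) of the second scheme (and in the one non-lazy clause of the first scheme), a newly marked card's label is, conditioned on being marked, uniform over the available unmarked cards of the appropriate type: this is because the acceptance probability $\tfrac{a}{p(\cdot)}$ (or $\tfrac{a^2}{p(R)p(L)}$) exactly cancels the bias $p(\cdot)$ in the selection probability, so the posterior weight on each candidate card is proportional to a constant. When a mark is \emph{moved} (the $1-\tfrac{a}{p(\cdot)}$ option in clauses (2), (3)), the card losing the mark is a uniformly random marked card of its type among those eligible — here one uses the inductive hypothesis that the marked-card labels were already type-uniform, so conditioning on "some marked card of type $b$ was at position $R_t$" keeps uniformity; and transitivity of $S_{2n}$ (more precisely, transitivity of the type-preserving subgroup on $k$-subsets of each type) then propagates uniformity through the swap. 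In clause (4), the designated pair $(r(u_i),\ell(u_i))$ is hit with probability proportional to $p(r(u_i))p(\ell(u_i))$, and the acceptance probability $\tfrac{a\,p(u_i)}{p(R_t)p(L_t)}$ again flattens this, so conditioned on marking via clause (4) the label $u_i$ is uniform over the unmarked cards of its type that have an associated pair — which is all of them. I would present this as a short case analysis, at each case exhibiting the conditional distribution of the affected label(s) and invoking transitivity.

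The main obstacle — and the point deserving the most care — is the \emph{bookkeeping of the conditioning}, not any single calculation. Because the waiting times $T_{k+1}-T_k$ depend on $(k_a,k_b)$, "conditioned on $t$ and the marked data" is a genuinely restrictive conditioning, and one must check that the induction hypothesis is stated for the right sigma-algebra: it should be the sigma-algebra generated by the times of marking events, the number and types of marked cards, but \emph{not} their labels or positions. One has to verify that the scheme's transition rule, as a function, depends on the current marked set only through this coarser data (type counts and positions via the fixed injective assignment $i\mapsto(r,\ell)$), so that the label-marginal can be updated without reference to the labels themselves; the move operations are what make this subtle, since moving a mark off $L_t$ changes which labels are marked in a way that a priori could correlate with the past. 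Resolving this is exactly where the type-uniformity hypothesis is used: conditioned on it, the move acts as a uniformly random transposition within a type-class, so the updated label-set is again type-uniform. Once this is in place, the conclusion that $T_N$ (the first time all cards are marked) is a strong uniform time follows by combining Proposition~\ref{s1sst} and this proposition: the argument already given in the paragraph preceding Proposition~\ref{s1sst} shows that type-uniformity of the marked labels together with independence of labels and positions forces the induced permutation on marked positions to be uniform, and at $t=T_N$ all $2n$ cards are marked, so $\pi_{T_N}$ is uniform on $S_{2n}$ independently of $T_N$.
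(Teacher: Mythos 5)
Your proposal addresses a different and weaker property than the one Proposition~\ref{s2sst} actually asserts. The ``marginal distribution of the marked cards'' is the distribution of the induced bijection from marked positions to marked labels (the arrangement of the marked cards), and ``invariant under permutation'' means this bijection is uniform given the two underlying sets. That is exactly what makes the first time all cards are marked a strong uniform time: at that time the full permutation $\pi_t$ is uniform on $S_{2n}$. Your inductive hypothesis, by contrast, is that the \emph{unordered set} of marked labels is uniform among the $k$-subsets of $[2n]$ with the prescribed type counts --- a statement about \emph{which} labels are marked, not about \emph{how they are arranged} --- and it does not by itself imply arrangement-uniformity. The paper's proof reasons directly about the arrangement via a three-way case split: a newly marked card is inserted uniformly among the $k+1$ possible slots (the acceptance probabilities are balanced so that $\frac{1}{k}\bigl((1\,k)+\cdots+(k\,k)\bigr)\pi$ is uniform on permutations of $[k]$); a step with no new mark either swaps two unmarked cards (no effect on the marked arrangement) or two marked cards (a fixed left-multiplication, which preserves uniformity); and when a mark is moved, the newly marked card simply assumes the demoted card's place in each equally likely arrangement. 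Nothing about type counts of labels is tracked, and labels and positions are not handled separately in the second phase.

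Your attempted bridge --- deducing arrangement-uniformity from type-uniformity of the label set ``together with independence of labels and positions,'' citing the paragraph preceding Proposition~\ref{s1sst} --- is precisely where the gap lies. That independence is proven in Proposition~\ref{s1sst} only for the first marking scheme, via Matthews's $\phi,\psi$ bookkeeping; the paper does not and cannot simply carry it over to the second scheme. In the second scheme, clause~(4) makes the marking probability of a label depend on positions through the fixed assignment $u_i\mapsto(r(u_i),\ell(u_i))$, which a priori couples labels to positions. You flag this as the main obstacle and the point deserving the most care, but you leave it unresolved. The paper's way around it is not to sharpen the independence argument but to abandon the $\phi,\psi$ decomposition entirely in phase two and reason directly about the marked-card arrangement, which is why its case analysis looks so different from the one you sketch.
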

\begin{proof}
For $k < c_1N$, this holds by Proposition \ref{s1sst}.

Under the second marking scheme, one of three things can happen.

{\it Case 1}: A new card is marked, no mark is removed.

{\it Case 2}: No new card is marked.

{\it Case 3}: A new card is marked, and the mark is removed from a card. This is also called moving a mark.

For {\it Case 1}, with equal probability the newly marked card is transposed with either of the marked cards, or remains where it is. If $\pi$ is uniformly distributed on permutations of $[k]$ with $k$ as a fixed point, $\frac{1}{k}( (1 k) + (2 k) + ... + ( k k))\pi$ is uniformly distributed on permutation of $[k]$. The newly marked card is acting as $k$ here, and so the marginal distribution of the marked cards remains uniform.

Under {\it Case 2}, either two unmarked cards were exchanged or two marked cards were exchanged (without $R_t = r(u), L_t = \ell(u)$ for all unmarked~$u$). The first trivially does not change the marginal distribution of the marked cards. Note that for a fixed permutation $\omega$, and $\pi$ is distributed uniformly in $S_n$, then so is $\omega \pi$, even conditioned on $R_t$ and~$L_t$. Thus, the marginal distribution of the marked cards is still uniform.

Finally, {\it Case 3} occurs if one of $R_t$ and $L_t$ is a marked $b$ card and the other is unmarked with probability $\frac{b-a}{b}$. This moves the mark to the previously unmarked card and puts that card in to the same location as the previously marked card. Since by induction, any ordering of the marked cards was equally likely, the newly marked card assumes the place of the previously marked card in each of these equally likely orders, and the same property holds.
\end{proof}

\bigskip

\section{Final remarks}\label{fin}

The construction in the paper can in principle be modified to work for every 
``biased'' distribution on permutations with probability of every transposition
$\Theta(1/N^2)$.  It would be interesting to see how far this bound can be pushed.
For example, is there a cutoff for probability of $(i,j)$ proportional to $(j-i)^3$. 

Another possible direction for generalization is the many examples of random walks
on matrix groups \ts SL$(n,q)$, \ts SO$(n,\Bbb R)$, etc.  Is there a reasonable way
to make a bias which would lead to the cutoff?  

\vskip.8cm

\subsection*{Acknowledgements}
We are very grateful to Nathanael Berestycki and Justin Salez for many helpful discussions.
This work originated while the authors were at the
``Markov chain mixing times" workshop at the American Institute
of Mathematics.  We would like to thank both the institute and
the workshop organizers for their help and encouragement.
The first author was partially supported by~NSF Grant 1344199. 
The second author was partially supported by an NSF CAREER Grant 1554783 
and a Sloan Research Fellowship. The third author was partially 
supported by the~NSF Grant~1363193.


\newpage 

{\small

}

\end{document}